\title[The generating function of Kreweras walks with interacting boundaries is not algebraic]{The generating function of Kreweras walks with interacting boundaries is not algebraic}
\author[A. Bostan, M. Kauers, T. Verron]{Alin Bostan\thanks{\href{mailto:alin.bostan@inria.fr}{alin.bostan@inria.fr}; supported by the French ANR grant \href{https://specfun.inria.fr/chyzak/DeRerumNatura/}{DeRerumNatura},  ANR-19-CE40-0018.}\addressmark{1}, Manuel Kauers\thanks{\href{mailto:manuel@kauers.de}{manuel.kauers@jku.at}; supported by the Austrian FWF grants F5004 and P31571-N32.}\addressmark{2}   
\and  Thibaut Verron\thanks{\href{mailto:thibaut.verron@gmail.com}{thibaut.verron@jku.at}; supported by the Austrian FWF grants F5004 and P31571-N32.}\addressmark{2}
}   
\address{\addressmark{1}Inria, Universit{\'e}  Paris-Saclay, 1 rue Honor{\'e} d'Estienne d'Orves, 91120 Palaiseau, France \\ \addressmark{2}Institute for Algebra, Johannes Kepler University, Linz, Austria}
\abstract{Beaton, Owczarek and Xu (2019) studied generating functions of
Kreweras walks and of reverse Kreweras walks in the quarter plane, with
interacting boundaries. They proved that for the reverse Kreweras step set,
the generating function is always algebraic, and for the Kreweras step set,
the generating function is always D-finite. However, apart from the particular
case where the interactions are symmetric in $x$ and~$y$, they left open the
question of whether the latter one is algebraic. Using computer algebra tools,
we confirm their intuition that the generating function of Kreweras walks is
not algebraic, apart from the particular case already identified.}
\keywords{Enumerative combinatorics, generating functions, lattice paths,
Kreweras walks, kernel method, computer algebra, creative telescoping,
automated guessing, algebraic functions, D-finite functions, hypergeometric
functions.}
\begin{document}

\maketitle

\section{Introduction} \label{sec:introduction}

It is always interesting to know whether a generating function is D-finite,
because D-finiteness gives easy access to a lot of useful information about
the series. It is also interesting to know whether a D-finite series is
algebraic, because algebraicity gives access to even more useful information
or makes more efficient algorithms applicable. Every algebraic series is
D-finite but not vice versa, and it is a notoriously difficult problem to
decide for a given D-finite power series whether it is algebraic or
not~\cite[\S4(g)]{Stanley80}. There exists an algorithm for deciding whether a
given linear differential equation has only algebraic
solutions~\cite{Singer80}. This algorithm can be generalized in order to
compute, for a given linear differential operator~$L$, another differential
operator $L^{\text{alg}}$, whose solution space is spanned by the algebraic
solutions of $L$~\cite{Singer14}. The operator $L^{\text{alg}}$ can then be
used to decide whether a specific solution~$y$ of $L(y)=0$ is algebraic or
transcendental. However, the algorithm for computing $L^{\text{alg}}$ is very
expensive, and to our knowledge it was never implemented.

A popular and simple check is to inspect the asymptotic behaviour of the
coefficient sequence: if it is not of the form $c\phi^n n^\alpha$ with
$\alpha\in\set Q\setminus\set Z_{<0}$, then the series is
transcendental~\cite{Flajolet87}. However, this condition is only necessary
but not sufficient. The purpose of this paper is to highlight a less popular
condition which is also necessary but not sufficient, and which can be tried
in many cases where the asymptotic test fails. The method consists in finding
(using computer algebra) a closed form representation of the D-finite function
in question and to prove (also using computer algebra) that the function has a
logarithmic singularity.

{Our method is an illustration of the \emph{guess-and-prove} paradigm, which
is classically used to prove algebraicity~\cite{BoKa10}: one guesses an
algebraic equation, then post-certifies it. Transcendence is a more difficult
task, as one needs to prove that no algebraic equation exists. However, if one
can still guess a differential equation, and solve it in explicit form, then
the explicit solution can lead to transcendence proofs. This is the
methodology promoted here. In order to facilitate its application to other
examples, we include a detailed description of the required computer algebra
calculations for Maple and Mathematica\footnote{Also available online:
\url{http://www.algebra.uni-linz.ac.at/research/kreweras-interacting}}. }

As a concrete example, we consider a power series that appears in a recent
study of restricted lattice walk models with interacting boundaries. A model
is determined by a step set $S\subseteq\{-1,0,1\}^2\setminus\{(0,0)\}$ and
consists of walks in the quarter plane $\set N^2$ starting at~$(0,0)$. For
each step set $S$, Beaton et al. \cite{Beaton_2019,Beaton_2020} are interested
in the generating functions $Q(a,b,c,x,y,t)\in\set Q[[a,b,c,x,y,t]]$, where
$[a^hb^vc^ux^i y^j t^n]Q$ is the number of walks of length $n$ starting at
$(0,0)$ ending at $(i,j)$, with $h$ visits of the horizontal axis, $v$~visits
of the vertical axis, and $u$~visits of the origin. Among other things, they
show that the generating function is algebraic for the step set
$\{\step{-1,-1},\step{1,0},\step{0,1}\}$ (known as reverse Kreweras), and that
the generating function is D-finite for the step set
$\{\step{1,1},\step{-1,0},\step{0,-1}\}$ (known as Kreweras). They conjecture
that this latter series is not algebraic, and this is what we will prove here.
More precisely, we will show the following:

\begin{thm}
  \label{prop:main-thm}
  Let $Q(a,b,c;x,y;t) \in \QQ[a,b,c][[x,y,t]]$ be the generating function counting Kreweras walks with interacting boundaries, restricted to the quarter plane.
  The generating function $Q(a,b,c;x,y;t)$ is not algebraic over $\QQ(a,b,c,x,y,t)$.

  Furthermore, for values $a,b,c \in \QQ$ with $c \neq 0$, the generating function $Q(a,b,c;x,y;t) \in \QQ[[x,y,t]]$ is algebraic over $\QQ(x,y,t)$ if and only if $a=b$.
\end{thm}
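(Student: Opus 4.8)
The plan is to deduce the first assertion from the second. Specialising $a,b,c$ to rational values is a ring homomorphism $\QQ[a,b,c][[x,y,t]]\to\QQ[[x,y,t]]$, and if $Q(a,b,c;x,y;t)$ satisfied a nontrivial polynomial equation over $\QQ(a,b,c,x,y,t)$, then for every $(a_0,b_0,c_0)\in\QQ^3$ outside the zero locus of the leading coefficient (in $Q$) of that equation, the specialised series would be algebraic over $\QQ(x,y,t)$. Since that zero locus, the hyperplane $a=b$, and the plane $c=0$ are three proper subvarieties of $\mathbb{A}^3$, their union misses some rational point; by the second assertion the corresponding specialised series is transcendental, a contradiction. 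So it suffices to prove the second assertion, and there the "if" direction is precisely the symmetric case $a=b$ already settled by Beaton et al.~\cite{Beaton_2019,Beaton_2020}.

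For the "only if" direction we must show that $Q(a,b,c;x,y;t)$ is transcendental over $\QQ(x,y,t)$ whenever $a,b,c\in\QQ$, $c\neq0$ and $a\neq b$. The method is the guess-and-prove scheme of the introduction. Starting from the functional (kernel) equation for Kreweras walks with interacting boundaries recorded in~\cite{Beaton_2019}, I would pass to a convenient specialisation of $Q$ that is known a priori to be D-finite — for instance a coordinate section such as $Q(a,b,c;x,0;t)$, or the return-to-origin series $Q(a,b,c;0,0;t)$. Using creative telescoping (and the accompanying Maple/Mathematica worksheets) I would first guess a linear ODE in $t$ annihilating this specialisation and then certify it rigorously. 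The hoped-for outcome is an operator of small order whose solution space can be written down explicitly, with a fundamental system consisting of algebraic functions together with one genuinely transcendental solution built from a logarithm — plausibly expressible through a ${}_2F_1$-type function with algebraic argument.

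The decisive step is then a local analysis at a dominant singularity $t=\rho$ of this explicit closed form: I would show that the specialisation of $Q$ contains a term behaving like $g(t)\log(1-t/\rho)$ near $t=\rho$, with $g$ not identically zero, and that this logarithmic contribution is not cancelled by the remaining (algebraic) part of the expression. Since an algebraic power series admits a Puiseux expansion at every point, and in particular has no logarithmic singularity, the presence of such a term forces transcendence. The crucial algebraic input is that the scalar multiplying the logarithm should factor through $a-b$: it then vanishes exactly when $a=b$ — consistent with the algebraicity of the symmetric model — and is a nonzero element of $\QQ(a,b,c)$ for every admissible rational pair $a\neq b$ with $c\neq0$, which is what makes the conclusion uniform in the parameters rather than valid only at an isolated sample point.

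I expect the main obstacle to be twofold. First, producing and certifying the ODE in a shape amenable to explicit integration: creative telescoping on a generating function carrying three catalytic-type variables and three parameters is costly, and the specialisation must be chosen so that the resulting operator is simultaneously guessable and solvable in closed form. Second, carrying out the singularity analysis carefully enough to \emph{prove}, rather than merely observe numerically, that the coefficient of the logarithm is a nonzero rational function of $a,b,c$ vanishing precisely on $a=b$. The cheaper asymptotic test of~\cite{Flajolet87} is expected to be inconclusive here — the coefficients should still grow like $c\,\phi^n n^\alpha$ — which is exactly the situation the logarithmic-singularity criterion is designed to handle.
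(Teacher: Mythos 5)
Your overall strategy --- reduce to rational parameter values, use guess-and-prove to obtain an explicit D-finite closed form, and detect transcendence through a logarithmic singularity --- is indeed the methodology of the paper, and your specialisation argument for deducing the generic statement from the rational-parameter one is sound (the paper instead runs the same argument with $a,b,c$ as indeterminates, which is equivalent). But there is a genuine gap in the ``only if'' direction: the objects you propose to analyse are the wrong ones. The series $Q(a,b,c;0,0;t)$ is \emph{algebraic} for the Kreweras model (this is \cite[Corollary~3]{Beaton_2020}, and the paper uses this fact), so no logarithmic singularity can be found there; and attacking $Q(a,b,c;x,0;t)$ head-on, with the catalytic variable $x$ and all three parameters present, is exactly the computation you yourself flag as the main obstacle --- there is no reason to expect a guessable, closed-form-solvable ODE for it. The missing idea is the reduction to a \emph{parameter-free} auxiliary series: the paper takes $\Theta=[x^{>}y^{0}]\bigl((x-y)(x^2y-1)(xy^2-1)/(xyK(x,y))\bigr)$, which depends only on $x$ and $t$, and invokes the identity of \cite[Lemma~10]{Beaton_2020} expressing a Laurent-polynomial combination of $Q(x,0)$, $Q(0,x)$, $Q(0,0)$, $Q_{1,0}$, $Q_{2,0}$, $Q_{3,0}$ as $t^3\frac{a-b}{c}\bigl(ab-(ab-ac-bc+abc)Q(0,0)\bigr)\Theta$. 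Only because $\Theta$ carries no parameters do creative telescoping, the LCLM decomposition and factorisation of the resulting operator, and the explicit ${}_2F_1$ solution become feasible; the factor $a-b$ you hope to see multiplying the logarithm lives entirely in this algebraic prefactor, not inside the transcendental series.

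A second, smaller gap: even granted the transcendence of the relevant series, concluding transcendence of $Q$ requires showing that the prefactor $ab-(ab-ac-bc+abc)Q(0,0)$ does not vanish when $a\neq b$ and $c\neq0$. This quantity is not a ``nonzero rational function of $a,b,c$'' as your sketch suggests --- it involves the power series $Q(0,0)$ --- and the paper disposes of it by extracting the coefficients of $t^0$ and $t^3$, which forces $a=b=0$ and contradicts $a\neq b$. Your plan as written does not account for this degenerate case, and without it the argument only shows that $Q$ is transcendental for parameters outside some unidentified exceptional set.
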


\section{Notations and the kernel equation}
\label{sec:notations}

We first recall some notations used in this paper.
Whenever possible, we follow the notations used in \cite{Beaton_2020}.
Let $R$ be an integral domain with fraction field $K$.
We denote:
\begin{itemize}
  \item $R[t]$ the ring of polynomials in $t$ with coefficients in $R$;
  \item $K(t)$ the field of rational functions in $t$ with coefficients in $K$, which is the fraction field of $R[t]$; 
  \item $R[t,1/t]$ the ring of Laurent polynomials in $t$ with coefficients in $R$;
  \item $R[[t]]$ the ring of formal power series in $t$ with coefficients in $R$.
\end{itemize}

Given $f(t) \in K((t))$, we denote by $[t^{n}] f$ the coefficient of $t^n$ in $f(t)$, so that $f(t) = \sum_{n \in \ZZ} ([t^{n}]f) t^n$.
We denote by $[t^{>}]f$ the sum of the terms of $f$ with positive exponents, that is, $[t^{>}]f = \sum_{n \in \ZZ_{>0}} ([t^{n}]f) t^n$.

We denote by $R[t]\langle \partial_{t}\rangle$ the Ore algebra of differential operators in $t$ with polynomial coefficients.
It is a non-commutative ring, and it has a left-action on the rings above, given by $\partial_{t}(f) = \pddt{f}$.

Those definitions can be iterated to extend them to multiple variables, 
and we group together the brackets when applicable: for example $R[[x,y]]$ is the ring of formal power series in $x,y$ with coefficients in $R$, and given $f \in R[[x,y]]$, we denote by $[x^{>}y^{0}] f$ the sum of terms of positive degree in $x$ and degree $0$ in $y$ in~$f$.

\paragraph{}
For $n,k,l,h,v,u \in \ZZ$, we denote by $q_{h,v,u;k,l;n}$ the number of walks of length $n$ which:
\begin{itemize}
  \item start at $(0,0)$ and end at $(k,l)$;\par\kern-\medskipamount
  \item never leave the upper-right quadrant $\{(x,y) \in \ZZ^{2} : x \geq 0, y \geq 0\}$;
  \item visit the horizontal boundary (excluding the origin) $\{(x,y) \in \ZZ^{2} : x > 0, y = 0\}$ exactly $h$ times;
  \item visit the vertical boundary (excluding the origin) $\{(x,y) \in \ZZ^{2} : x = 0, y > 0\}$ exactly $v$ times;
  \item visit the origin $u$ times (not counting the starting point).
\end{itemize}
The associated generating function $Q(a,b,c;x,y;t)$ is defined as
\begin{equation*}
  \label{eq:1}
  Q(a,b,c;x,y;t) = \sum_{n} t^{n} \sum_{k,l} x^{k}y^{l} \sum_{h,v,u} q_{h,v,u;k,l;n} a^{h}b^{v}c^{u}.
\end{equation*}
Note that, since there are only finitely many walks of a given length, for each $n$, the two innermost sums define a polynomial.
Hence $Q(a,b,c;x,y;t)$ lives in $\QQ[a,b,c,x,y][[t]] \subset \QQ[a,b,c][[x,y,t]]$.
For shortness, we shall write $Q(x,y) := Q(a,b,c;x,y;t)$.
In particular, $Q(0,0)$ is the generating function counting interacting walks ending at $(0,0)$, $Q(x,0)$ is the generating function counting interacting walks ending on the horizontal axis and $Q(0,y)$ is the generating function counting interacting walks ending on the vertical axis.

Finally, we denote by $Q_{i,j} = Q_{i,j}(a,b,c;t) := [x^{i}y^{j}]Q(x,y) \in \QQ[a,b,c][[t]]$ the generating function counting interacting walks  ending at point $(i,j)$.
The coefficient of $t^{n}$ in $Q_{i,j}$ is a polynomial in $a,b,c$, and its coefficient for the monomial $a^{h}b^{v}c^{u}$ is exactly $q_{h,v,u;k,l;n}$.

The elements $a,b,c$ are called weights associated respectively to the horizontal boundary (excluding the origin), the vertical boundary (excluding the origin) and the origin.

\paragraph{}
Given a step set $\mathcal{S} \subseteq \{-1,0,1\}^{2} \setminus \{(0,0)\}$, the step generator $S$ is
\begin{equation*}
  \label{eq:2}
  S(x,y) = \sum_{(i,j) \in \mathcal{S}} x^{i}y^{j} \in \QQ[x,1/x,y,1/y].
\end{equation*}
We denote by
\begin{itemize}
  \item $A(x,y) = \sum_{(i,-1) \in \mathcal{S}} x^{i}y^{-1}$ the step generator for the steps going southwards;
  \item $B(x,y) = \sum_{(-1,j) \in \mathcal{S}} x^{-1}y^{j}$ the step generator for the steps going westwards;
  \item $G(x,y) = x^{-1}y^{-1}$ if $(-1,-1) \in \mathcal{S}$ and $0$ otherwise, the step generator for the steps going south-westwards.
\end{itemize}

The \emph{kernel} of the step set is $K(x,y) = 1 - tS(x,y) \in \QQ[t,x,1/x,y,1/y]$.

The kernel equation is a functional equation satisfied by the generating function counting walks restricted to the quarter plane.

\begin{thm}[{\cite[Theorem 1]{Beaton_2020}}]
  For a lattice walk restricted to the quarter-plane, starting at the origin, with weights $a$ (resp. $b$) associated with vertices on the $x$-axis excluding the origin (resp. the $y$ axis excluding the origin), and weight $c$ associated with the origin, the generating function $Q(x,y)$ satisfies the following functional equation
  \begin{multline}
    \label{eq:kernel}
    K(x,y)Q(x,y) = \frac{1}{c} + \frac{1}{a}\left( a-1-taA(x,y)\right)Q(x,0)
    + \frac{1}{b}\left( b-1-tbB(x,y) \right)Q(0,y)\\
    + \left( \frac{1}{abc}(ac + bc - ab - abc) + t G(x,y) \right)Q(0,0).
  \end{multline}
\end{thm}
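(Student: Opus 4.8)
The plan is to prove transcendence of $Q(a,b,c;x,y;t)$ as a function of the parameters by specializing and analyzing a single well-chosen section, rather than working with the full six-variable series directly. The starting point is the kernel equation~\eqref{eq:kernel} for the Kreweras step set, where $S(x,y)=xy+1/x+1/y$, $A(x,y)=1/y$, $B(x,y)=1/x$, and $G(x,y)=1/(xy)$. First I would apply the \emph{kernel method}: the kernel $K(x,y)=1-t(xy+1/x+1/y)$ factors in a way that lets one cancel the left-hand side by substituting the two conjugate roots $Y_\pm(x)$ of $K(x,Y)=0$ (viewed as a power series in~$t$), together with the symmetry of the Kreweras model under the group of birational transformations generated by $(x,y)\mapsto(1/(xy),y)$ and $(x,y)\mapsto(x,1/(xy))$. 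This group is known to have order~$6$ for Kreweras, and summing the kernel equation over its orbit produces an \emph{orbit sum} in which the unknown boundary series $Q(x,0)$, $Q(0,y)$, $Q(0,0)$ are eliminated, yielding an explicit expression for $Q(x,y)$ (up to a positive-part extraction) as a rational combination of the series evaluated at the six group images.

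Next I would extract from this orbit relation a concrete univariate specialization whose algebraicity is equivalent to that of the full series. Since algebraicity is preserved under taking diagonals, sections, and specialization of free parameters to generic rational values, it suffices to exhibit one specialization of $(a,b,c,x,y)$ for which the resulting power series in~$t$ is provably transcendental. I would specialize $x=y=0$ to work with $Q_{0,0}(a,b,c;t)=Q(0,0)$, or, if that section degenerates, with $Q(x,0)$; the orbit sum then expresses this boundary series as a positive-part (constant-term) extraction of an explicit algebraic function of $t$ and the parameters. The guess-and-prove step enters here: using creative telescoping I would compute a linear ODE in~$t$ annihilating the chosen specialization, guess it from the first few hundred series coefficients, and certify it. This differential equation is the object on which transcendence will be decided.

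The transcendence argument itself proceeds by local analysis of the certified ODE at its dominant singularity $t=\rho$. The key step is to solve the ODE in closed form near~$\rho$ and show that the specialized generating function has a \emph{logarithmic singularity}, i.e.\ its local expansion contains a term of the form $(\rho-t)^r\log(\rho-t)$ with $r\in\QQ$. Since every algebraic function over $\QQ(t)$ has only Puiseux-series singularities with no logarithmic terms, the presence of such a logarithm forces transcendence, and because algebraicity over $\QQ(a,b,c,x,y,t)$ would imply algebraicity of each rational specialization, this establishes the first claim of the theorem. For the second claim I would show that in the symmetric regime $a=b$ the orbit sum collapses: the extra symmetry exchanging $x$ and~$y$ enlarges or simplifies the invariant combination so that the positive-part extraction becomes a genuinely algebraic operation, matching the explicit algebraic solution that can then be guessed and certified by the standard \emph{guess-and-prove} route of~\cite{BoKa10}. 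Conversely, for $a\neq b$ with $c\neq0$ the same local analysis must be repeated to exhibit the logarithm, and one checks that the $a=b$ cancellation is the \emph{only} one removing it.

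The main obstacle I expect is twofold. First, controlling the positive-part (constant-term) extraction: the orbit sum gives an algebraic expression, but $Q(x,y)$ is its positive part, and the constant-term operation can either preserve algebraicity (the symmetric case) or destroy it, so the delicate point is to prove rigorously, via the local structure of the ODE rather than by inspection, exactly when the logarithm survives. Second, the closed-form solution of the guessed ODE near $t=\rho$ may require recognizing a hypergeometric or higher-order solution whose logarithmic behaviour arises from a resonance (an integer or confluent exponent difference) at the singularity; verifying that this resonance genuinely produces a $\log$ term, and is not cancelled by the particular initial conditions coming from the combinatorics, will be the technically decisive computation. Establishing that the resonance is present precisely when $a\neq b$ is the heart of the proof.
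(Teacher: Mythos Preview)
Your proposal does not address the stated theorem. The statement in question is the \emph{kernel functional equation} for $Q(x,y)$, quoted verbatim from \cite[Theorem~1]{Beaton_2020}; the present paper does not prove it but merely cites it as input. Its proof (in the cited reference) is a routine last-step decomposition: split walks according to their final step, correct for steps that would leave the quadrant, and attach the weight $a$, $b$, or $c$ according to whether the new endpoint lies on the positive $x$-axis, the positive $y$-axis, or the origin. No orbit sums, creative telescoping, asymptotics, or transcendence arguments are involved.

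What you have actually sketched is a plan for the \emph{main} result of the paper, Theorem~\ref{prop:main-thm}, the transcendence of $Q$. Even judged against that target, your outline diverges from the paper's argument and leaves the decisive step unresolved. The paper does not run an orbit sum and then analyze a dominant singularity of a specialized $Q$; instead it imports from \cite{Beaton_2020} the identity~\eqref{eq:4}, in which the factor $(a-b)$ is already explicit, reducing everything to the transcendence of the single auxiliary series $\Theta=[x^{>}y^{0}]\bigl((x-y)(x^2y-1)(xy^2-1)/(xyK)\bigr)$. That series is then handled by creative telescoping, factoring the resulting operator, and solving in closed hypergeometric form; the logarithm is exhibited at $t=0$ for ${}_2F_1(-1/3,1/3;2;t)$, not at a dominant singularity $\rho$. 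Your plan's crux, ``establish that the resonance is present precisely when $a\neq b$'', is exactly what the identity~\eqref{eq:4} delivers for free, and your sketch gives no mechanism to produce it from the orbit sum alone.
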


\section{Main result and the power series $\Theta$}
\label{sec:kern-equat-seri}

We consider specifically the Kreweras step set $\mathcal{S} = \{(1,1),(-1,0),(0,-1)\}$.
By exhaustive enumeration, the generating function
$Q(a,b,c;x,y;t)\in  \QQ[a,b,c,x,y][[t]]$ starts 
\[ 
1+xy \, t+ \left( {x}^{2}{y}^{2}+ax+by \right) {t}^{2}+ \left( {x}^{3}{y}^{3}+ (a+1){x}^{2}y+(b+1)x{y}^{2}+ac+bc \right) {t}^{3}+\cdots.
\]  
For instance, at length 3,
the walk $(0,0) \rightarrow (1,1) \rightarrow (2,2) \rightarrow (3,3)$
 corresponds to the term $a^0 b^0 c^0 x^3 y^3 t^3 =  x^3 y^3 t^3$, as it does not touch any of the axes after leaving the origin, 
while the walk 
$(0,0) \rightarrow (1,1) \rightarrow (1,0) \rightarrow (0,0)$, corresponds
to $a^1 b^0 c^1 t^3 x^0 y^0 = ac t^3$, as after leaving the origin it touches the positive horizontal axis once, it returns to the origin once, but
does not touch the positive vertical axis.

The main result of this paper is that $Q$ is not algebraic (Theorem~\ref{prop:main-thm}).
In order to prove it, we define
\begin{equation}
  \label{eq:3}
  \Theta = [x^{>}y^{0}]\left( \frac{(x-y)(x^{2}y-1)(xy^{2}-1)}{xyK(x,y)} \right),
\end{equation}
where $K(x,y) = 1-t \left( xy+{x}^{-1}+{y}^{-1} \right)$.
We will prove that $\Theta$ is not algebraic.
The connection between $\Theta$ and $Q$ comes from the following lemma.

\begin{lem}[{\cite[Lemma 10]{Beaton_2020}}]
  There exist Laurent polynomials $\beta, \beta_{x,0}, \beta_{0,x}, \beta_{0,0}, \beta_{1,0}, \beta_{2,0}, \beta_{3,0} \in \QQ[t,1/t,x,1/x]$,
  such that
  \begin{multline}
    \label{eq:4}
    \beta + \beta_{x,0}Q(x,0) + \beta_{0,x}Q(0,x) + \beta_{0,0}Q(0,0) + \beta_{1,0}Q_{1,0} + \beta_{2,0}Q_{2,0} + \beta_{3,0}Q_{3,0} \\
    = t^{3} \frac{a-b}{c}(ab - (ab-ac-bc+abc)Q(0,0)) \Theta.
  \end{multline}
\end{lem}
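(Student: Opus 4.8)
The plan is to derive~\eqref{eq:4} directly from the kernel equation~\eqref{eq:kernel} by the orbit-sum form of the kernel method, specialised to Kreweras, followed by a single coefficient extraction. First I would specialise~\eqref{eq:kernel}: for $\mathcal S=\{(1,1),(-1,0),(0,-1)\}$ one has $A(x,y)=1/y$, $B(x,y)=1/x$ and $G(x,y)=0$, so after multiplying by $xy$ the equation becomes an identity between Laurent polynomials in $x,y$ with power-series coefficients in $t$,
\begin{equation*}
  \bigl(xy-t(x^2y^2+x+y)\bigr)\,Q(x,y)=\tfrac{xy}{c}+P_a\,Q(x,0)+P_b\,Q(0,y)+\tfrac{ac+bc-ab-abc}{abc}\,xy\,Q(0,0),
\end{equation*}
with $P_a=xy(1-\tfrac1a)-tx$ and $P_b=xy(1-\tfrac1b)-ty$, and $K=1-t(xy+x^{-1}+y^{-1})$.

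Next I would bring in the symmetry group. The kernel $K$ is fixed by the two birational involutions $\Phi\colon(x,y)\mapsto(1/(xy),y)$ and $\Psi\colon(x,y)\mapsto(x,1/(xy))$, which generate a group $G$ of order six. Two features of Kreweras are decisive: the signed orbit sum of $xy$ vanishes, $\sum_{\sigma\in G}\operatorname{sgn}(\sigma)\,\sigma(xy)=0$, and the fundamental anti-invariant of $G$ is $g:=(x-y)(x^2y-1)(xy^2-1)/(x^2y^2)$, with $\sigma(g)=\operatorname{sgn}(\sigma)\,g$; note that $N/(xyK)=xy\,g/K$, so $\Theta=[x^{>}y^{0}](xy\,g/K)$. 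Applying $\sigma$ to the specialised equation, using $\sigma(K)=K$ and dividing by $K$, I would form the signed orbit sum
\begin{equation*}
  \sum_{\sigma\in G}\operatorname{sgn}(\sigma)\,\sigma(xy\,Q)=\frac1K\sum_{\sigma\in G}\operatorname{sgn}(\sigma)\,\sigma(R),
\end{equation*}
$R$ being the right-hand side above. On the right, the free term $xy/c$ and the coefficient of $Q(0,0)$, both constant multiples of $xy$, are annihilated by $\sum_{\sigma}\operatorname{sgn}(\sigma)\,\sigma(xy)=0$, leaving only the transformed boundary series $\sigma(P_a)\,Q(\sigma_x,0)$ and $\sigma(P_b)\,Q(0,\sigma_y)$.

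I would then apply the section operator $[x^{>}y^{0}]$ to both sides. A short inspection of the six orbit images of $xy\,Q$ shows that every one of them is killed: the identity and the swap $(x,y)\mapsto(y,x)$ produce only terms with $y$-exponent $\ge 1$, and the remaining four produce only terms with uniformly negative $x$- or $y$-exponent. Hence the left-hand side vanishes under $[x^{>}y^{0}]$ and I am left with a relation among boundary series alone. Sorting the six transformed boundary contributions, the images that keep the active variable equal to $x$ survive as honest power series $Q(x,0)$ and, after the rotation $y\mapsto x$, $Q(0,x)$; the images that send the variable to $1/(xy)$ expand as $Q(1/(xy),0)=\sum_{k\ge0}Q_{k,0}(xy)^{-k}$, whose low-order part yields the constant term $Q_{0,0}=Q(0,0)$ and the three corrections $Q_{1,0},Q_{2,0},Q_{3,0}$ after extraction; and the genuinely bivariate remainder, being antisymmetric under $a\leftrightarrow b$, collapses to a multiple of $g/K$ whose section is $\Theta$. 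Rearranging, and absorbing the constant and $Q(0,0)$ contributions into the prefactor, produces~\eqref{eq:4}, the factor $a-b$ and the power $t^3$ emerging from the lowest order at which the bivariate part enters.

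The hard part will be the exact bookkeeping of the section operator through the substitutions $x\mapsto1/(xy)$, $y\mapsto1/(xy)$ in the presence of the factor $1/K$: one must verify that the tails of the transformed boundary series cancel so that the surviving coefficients $\beta,\beta_{x,0},\beta_{0,x},\beta_{0,0},\beta_{1,0},\beta_{2,0},\beta_{3,0}$ are honest Laurent polynomials in $\QQ[t,1/t,x,1/x]$ and that no correction beyond $Q_{1,0},Q_{2,0},Q_{3,0}$ is needed, and that the bivariate remainder is precisely $t^3\tfrac{a-b}{c}\bigl(ab-(ab-ac-bc+abc)Q(0,0)\bigr)\,\Theta$. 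Since~\eqref{eq:4} is an identity of power series in $t$ whose coefficients are Laurent polynomials in $x$, once candidate expressions for the $\beta_\bullet$ have been produced this way, the identity can be certified rigorously by comparing a bounded number of coefficients in $x$ and $t$ — a terminating and routine computer-algebra check.
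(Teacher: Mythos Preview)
The paper does not re-derive this identity: its proof is a two-line citation of \cite[Lemma~10]{Beaton_2020} together with the identification $\theta = t^{3}\frac{a-b}{c}\,ab\,\Theta$ and $\theta_{0,0} = t^{3}\frac{a-b}{c}(ab-ac-bc+abc)\,\Theta$. So you are reconstructing the argument of the cited reference, which is a legitimate and more self-contained route; the orbit-sum strategy you sketch is indeed the right framework.

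There is, however, a concrete gap in your plan. You take the \emph{full} signed orbit sum over the group of order six and correctly observe that $\sum_{\sigma}\operatorname{sgn}(\sigma)\,\sigma(xy)=0$, so the free term $xy/c$ and the $Q(0,0)$-coefficient $\tfrac{ac+bc-ab-abc}{abc}\,xy$ are annihilated. But look at the paper's identification: $\theta$ arises precisely from the free term $1/c$ and $\theta_{0,0}$ from the $Q(0,0)$-coefficient of the kernel equation, and both are nonzero multiples of~$\Theta$. If those two contributions are killed before the section, the right-hand side of~\eqref{eq:4} cannot appear. What the full orbit sum leaves behind is a combination of boundary pieces whose $(1/(xy))$-images contribute, at lowest order, $\tfrac{a-b}{ab}\,\frac{x-y}{xyK}\,Q(0,0)$ rather than a multiple of $g/K$; your assertion that the ``bivariate remainder collapses to a multiple of $g/K$'' is not what the computation gives. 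The anti-invariant $g=(x-y)(x^2y-1)(xy^2-1)/(x^2y^2)$ is produced by a \emph{half}-orbit sum (three terms, one per coset of the swap $(x,y)\mapsto(y,x)$), under which the free term $xy$ does \emph{not} cancel but instead becomes a multiple of $xy\,g$; dividing by $K$ and applying $[x^{>}y^{0}]$ then yields~$\Theta$. Replacing the full orbit sum by the half-orbit sum in your outline would align your derivation with the cited lemma and make the appearance of $\Theta$, of the prefactor $t^3(a-b)/c$, and of the finite list $Q_{1,0},Q_{2,0},Q_{3,0}$ transparent.
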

\begin{proof}
  It is a straightforward transposition of~\cite[Lemma 10]{Beaton_2020}, by observing that with the notations therein, $\theta =  t^{3}\frac{a-b}{c} ab \Theta$ and
$\theta_{0,0} =  t^{3}\frac{a-b}{c}(ab-ac-bc+abc) \Theta$.
\end{proof}

\section{Transcendence of $\Theta$}
\label{sec:non-algebr-theta}

Recall that for $\alpha,\beta,\gamma\in\set Q$ 
and $-\gamma \notin \mathbb{N}$,                               
the Gaussian hypergeometric series $\pFqText{\alpha,\,\beta}{\gamma}{t}$ is defined as
\begin{equation*}
  \label{eq:11}
  \pFq{\alpha, \beta}{\gamma}{t} := \sum_{n=0}^{\infty} \frac{(\alpha)_n(\beta)_n}{(\gamma)_n} \, \frac{t^n}{n!}
  \in \QQ[[t]],
\end{equation*}
where 
$(u)_n$ denotes the Pochhammer symbol $(u)_n=u(u+1)\cdots(u+n-1)$ for $n\in\mathbb{N}$.
It satisfies the differential equation 
$
\left( {t}^{2}-t \right) y'' \left( t \right) +
\left( (\alpha+\beta+1)t - \gamma \right)  y' \left( t \right) + 
\alpha \beta \, y \left( t \right) = 0.
$ 
\begin{thm}
  \label{thm:theta-non-algebraic}
  The power series $\Theta$ defined in~\eqref{eq:3} admits the following closed form representation: 
  \begin{equation*}
    \label{eq:6}
    \Theta(t;x) = A_{1}(t;x) + A_{2}(t;x) \int_{0}^{t} A_{3}(s;x) T(s;x) ds, 
  \end{equation*}
  where
  \begin{equation*}
    \begin{aligned}
      A_{0} &= \sqrt{1-\frac{2t}{x}-(4x^3-1)\frac{t^{2}}{x^2}}, \\[5pt]
      A_{1} &= \frac{1}{6xt^{3}} - \frac{x^{3}-1}{2x^{2}t^{2}} + \frac{2-3x^{3}}{6x^{3}t}
      + \frac{tx^{3}+2t-x}{6t^{3}x^{2}} A_{0}, \\[5pt]
      A_{2} &= \frac{x^{2}(x-tx^{3}-2t)}{3t^{3}} A_{0}, \\[5pt]
      A_{3} &= \frac{1}{(tx^{3}+2t-x)^{2}(4t^{2}x^{3}-(x-t)^{2})A_{0}}, \\[5pt]
      T &= (3t-x)x \;\pFq{-1/3, -2/3}{1}{27t^{3}}
      + 4t(2tx^{3}+t-x)\, \pFq{-1/3,1/3}{2}{27t^{3}}.
    \end{aligned}
  \end{equation*}
  The power series $A_{0}, A_{1}, A_{2}$ and $A_{3}$ are algebraic, and the power series $T$ is transcendental.
  In particular, $\Theta$ is transcendental.
\end{thm}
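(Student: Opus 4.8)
The plan is to proceed in two stages: first, produce the explicit closed form for $\Theta$ (the displayed formula) by a computer-algebra "guess-and-prove" computation; second, prove that the hypergeometric combination $T$ is transcendental, from which the transcendence of $\Theta$ follows since $A_0, A_1, A_2, A_3$ are algebraic. For the first stage I would start from the creative telescoping / kernel-method description \eqref{eq:3}: extract $[x^{>}y^{0}]$ of the rational function $\frac{(x-y)(x^2y-1)(xy^2-1)}{xyK(x,y)}$ by a suitable residue or diagonal computation, obtaining a linear ODE in $t$ (with polynomial coefficients in $t$ and $x$) annihilating $\Theta$. Concretely, one computes enough power-series terms of $\Theta$, guesses an annihilating operator $L\in\QQ(x)[t]\langle\partial_t\rangle$, and certifies it rigorously by creative telescoping applied to the rational integrand (so the ODE is not merely guessed but proved). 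One then factors $L$ and solves it: the factorization should reveal a first-order right factor (accounting for the algebraic part $A_1$ and the homogeneous algebraic solution $A_2$) and a second-order factor whose solutions are the pullbacks by $t\mapsto 27t^3$ of the Gauss hypergeometric functions ${}_2F_1(-1/3,-2/3;1;\cdot)$ and ${}_2F_1(-1/3,1/3;2;\cdot)$. Matching initial conditions of $\Theta$ against the general solution of $L$ then pins down the scalars and the integral representation; verifying the resulting closed form is a routine substitution back into $L$ and a check of finitely many coefficients.

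For the second stage — transcendence of $T$ — I would argue as follows. The two series $f_1(u) := {}_2F_1(-1/3,-2/3;1;u)$ and $f_2(u) := {}_2F_1(-1/3,1/3;2;u)$, evaluated at $u = 27t^3$, together span the solution space of an explicit second-order operator $M\in\QQ(t)\langle\partial_t\rangle$ (the pullback of the Gauss hypergeometric operator), and $T$ is a $\QQ[t,x]$-linear combination of them. I want to show $T$ is not algebraic over $\QQ(t,x)$ — equivalently, since $x$ is a transcendental parameter here, that no nonzero polynomial relation over $\QQ(x)$ kills $T$ as a series in $t$. The cleanest route is a \emph{local monodromy / singularity} argument: one checks that $M$ (or the minimal operator of $T$) has a logarithmic singularity — that is, at some finite singular point $t_0$ the local exponents differ by an integer and the two local solutions are genuinely entangled by a logarithm, so that the analytic continuation of $T$ around $t_0$ has infinite monodromy. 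An algebraic function can have only finitely many branches, so a logarithmic singularity forces transcendence. To make this effective I would (i) compute the indicial equation of the minimal-order operator of $T$ at each of its finite singular points, (ii) identify a point where the exponent difference is a nonnegative integer, and (iii) show by a Frobenius-method computation (again a finite, certifiable calculation) that the logarithmic term in the second local solution actually appears and is inherited by $T$ — i.e. $T$ is not a pure power-series combination that cancels the $\log$.

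The main obstacle is step (iii): verifying that the logarithm is genuinely present in $T$ rather than accidentally cancelled by the particular coefficients $(3t-x)x$ and $4t(2tx^3+t-x)$. The exponent-difference condition only says a $\log$ \emph{may} occur; one must exhibit it. Here the presence of the extra parameter $x$ is an advantage: the coefficients of the combination are not constants, so cancellation would be a nongeneric algebraic coincidence, and one can rule it out by evaluating the local expansion at a single convenient numerical value of $x$ (or by keeping $x$ symbolic and tracking the $\log$-coefficient as an explicit nonzero element of $\QQ(x)$). Equivalently, one can invoke known criteria for (non-)algebraicity of Gauss hypergeometric functions: ${}_2F_1(-1/3,-2/3;1;u)$ and ${}_2F_1(-1/3,1/3;2;u)$ are individually algebraic (their parameters satisfy Schwarz's conditions — the first is essentially an algebraic function by the Beukers–Heckman / Schwarz list, the second likewise up to an algebraic prefactor), so the subtlety is that $T$ is a \emph{sum} of two algebraic pieces with polynomial-in-$t$ coefficients that need \emph{not} be algebraic; here I would instead directly analyze the minimal operator of $T$ and show it is irreducible of order two with a non-algebraic solution. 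Either way, once $T$ is shown transcendental, the closed form $\Theta = A_1 + A_2\int A_3 T$ with $A_1,A_2,A_3$ algebraic immediately gives transcendence of $\Theta$: if $\Theta$ were algebraic, then $\int A_3 T = (\Theta - A_1)/A_2$ would be algebraic, hence its derivative $A_3 T$ would be algebraic, hence $T = (A_3 T)/A_3$ would be algebraic, a contradiction.
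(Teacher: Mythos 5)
Your first stage is essentially the paper's argument: a certified annihilator of $\Theta$ obtained by creative telescoping on the residue representation of $[x^{>}y^{0}]\Theta_0$, verification that the proposed closed form satisfies it, and identification of the two series by matching finitely many initial coefficients (the paper makes the last step precise by computing that the space of power series solutions of the telescoped operator has dimension $2$ over $\QQ(x)$, so two coefficients suffice). For the transcendence of $T$, however, you take a genuinely different and less sharp route. The paper disposes of the cancellation issue you correctly identify as the main obstacle by a one-line specialization: setting $x=3t$ annihilates the coefficient $(3t-x)x$ of the first hypergeometric term, so if $T(t;x)$ were algebraic then so would be $T(t;3t)$, hence so would be the single function ${}_2F_1(-1/3,1/3;2;t)$; its transcendence is then read off from Schwarz's classification or from the logarithm in the local expansion at $0$ of a solution of its minimal-order (second-order, irreducible) equation. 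Your plan---compute the minimal operator of $T$ over $\QQ(x,t)$, show it is irreducible of order two, and exhibit a surviving logarithm by a Frobenius computation---would also work and is what the paper sketches as an alternative ``operator-level'' argument at the end of Section~4, but as written it leaves the decisive step (that the logarithm is not cancelled in $T$ itself) to an unexecuted computation.

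One concrete error to fix: your parenthetical claim that ${}_2F_1(-1/3,-2/3;1;u)$ and ${}_2F_1(-1/3,1/3;2;u)$ are ``individually algebraic'' by the Schwarz list is false, and for the second function it contradicts exactly the fact the paper relies on. Both hypergeometric equations are irreducible (none of $a$, $b$, $c-a$, $c-b$ is an integer), and their exponent-difference triples are $(0,2,1/3)$ and $(1,2,2/3)$ respectively: the zero (resp.\ nonzero integer) exponent difference produces a logarithmic local solution, hence infinite monodromy, so by irreducibility the holomorphic solutions at the origin are transcendental. This does not invalidate your main route (which does not use that claim), but it does mean the ``sum of two algebraic pieces'' framing of the difficulty is wrong; the difficulty is only whether the particular combination $T$ could accidentally be algebraic, which is what the specialization $x=3t$ rules out cleanly.
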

\begin{proof}
We prove that $\Theta$ is equal to $C:=A_{1}+A_{2}\int A_{3}T$, in four steps:
  \begin{enumerate}
    \item use creative telescoping~\cite{Chyzak00,Koutschan10} to obtain a differential operator $L_{ct} \in \QQ(x,t)\langle \partial_{t}\rangle$ annihilating $\Theta$;
    \item verify that $L_{ct}$ annihilates~$C$; 
    \item find $r\in\mathbb N$ such that if $s \in\set Q(x)[[t]]$ is a power series solution of $L_{ct}$ and $s = 0 \mod{t^{r}}$, then $s=0$; 
    \item verify that $\Theta$ and $C$, both power series solution of $L_{ct}$, are equal modulo $t^{r}$ and so they are actually equal.
  \end{enumerate}

For the first step, define
  \begin{equation*}
    \label{eq:8}
    \Theta_{0}(t;x,y) = \frac{(x-y)(x^{2}y-1)(xy^{2}-1)}{xyK(x,y)} \in \QQ(x,y,t)
  \end{equation*}
  so that $\Theta = [x^{>}y^{0}]\Theta_{0}$.
  In order to bring the problem to a form suitable to creative telescoping algorithms, we encode the coefficient extractions as residues.
  Extracting the constant coefficient in~$y$ is immediate: for any $F \in \QQ[x,1/x,y,1/y][[t]]$, as by definition,
  \begin{equation*}
    \label{eq:9}
    [y^{0}]F(t;x,y) = \Res_{y=0} \left(\frac{F(t;x,y)}{y}\right) \in \QQ[x,1/x][[t]].
  \end{equation*}
  For extracting the positive part, we follow~\cite[Theorem~3]{BCHKP17}: for any $F \in \QQ[x,1/x][[t]]$, 
  \begin{equation*}
    \label{eq:10}
    [x^{>}]F(t;x) = \Res_{z=0}\left[ \frac{1}{z} F(t;z) \frac{\frac{x}{z}}{1 - \frac{x}{z}} \right] \in \QQ[x][[t]].
  \end{equation*}
  So composing the two, we get
  \begin{equation}
    \label{eq:7}
    \Theta(t;x) = \Res_{z=0}\Res_{y=0} \left[ \frac{1}{yz} \Theta_{0}(t;z,y) \frac{\frac{x}{z}}{1 - \frac{x}{z}} \right] .
  \end{equation}
  An annihilator for this can now be computed using creative telescoping, for example, using the Mathematica package \texttt{HolonomicFunctions}~\cite{Koutschan09}, with the following:
\begin{verbatim}
(* In Mathematica *)
<< "HolonomicFunctions.m"
Theta0 = (x-y)*(x^2*y-1)*(x*y^2-1)/(x*y*(1-t*(1/x+1/y+x*y)))
Theta0z = Theta0 /. x -> z
Lct = First[First[CreativeTelescoping[
        First[CreativeTelescoping[Theta0z/z/y * (x/z)/(1-x/z),
                                  Der[y], {Der[z],Der[t]}]],
        Der[z], {Der[t]}]]]
\end{verbatim}
  This yields an operator $L_{ct} \in\set Q(x,t)\langle \partial_{t}\rangle$ of order~6, which annihilates~$\Theta$.

  Checking that $L_{ct}$ annihilates $C$ is a straightforward computation with a computer algebra software.
  For instance, in Maple, the following command evaluates to 0:
\begin{verbatim}
# In Maple
with(DEtools);
simplify(eval(diffop2de(Lct, [Dt,t], y(t)), y(t) = C));
\end{verbatim}
  For the last step, we need to look at a basis of power series solutions of~$L_{ct}$.
Computer algebra software can again be used to compute (truncations of) elements in such a basis.
  For instance, this can be done with the following lines in Maple.
\begin{verbatim}
# In Maple
Order := 8; 
sols := formal_sol(Lct,[Dt,t]);

# Keep only the power series solutions
sols := select(s -> type(series(s,t=0),'taylor'), sols);
\end{verbatim}
 The output shows that the set of power series solutions is a $\QQ(x)$-vector space of dimension $2$ spanned, after a change of basis bringing the first terms to echelon form, by
  \begin{align*}
    s_{0} &= 1 + xt^{2} + t^{3} + O(t^{4}), \\
    s_{1} &= t + \frac{1-x^{3}}{x} t^{2} + \frac{1}{x^{2}}t^{3} + O(t^{4}).
  \end{align*}
  So a power series solution of $L_{ct}$ is entirely determined by its coefficients of degree $0$ and~$1$, 
  and in particular knowing a power series modulo $t^{2}$ is enough.
  
  Finally, checking that the first two coefficients of $C$ and $\Theta$ are equal is again a straightforward computation.
  For instance, again using Maple:
\begin{verbatim}
# In Maple
map(normal,series(C,t,5));
series(Theta,t,2);
\end{verbatim}
returns the same result $- x^{2} + O(t^{2})$. This allows to conclude that $\Theta = C$.

  For the second statement of the theorem, note that $A_{0}$, $A_{1}$, $A_{2}$ and $A_{3}$ are algebraic by closure properties of algebraic functions.
  The proof of the fact that $T$ is transcendental 
combines human observation and computer algebra.
  First, observe that if $T(t;x)$ was algebraic, then by closure properties so would be $T(t;3t)$, which has only one hypergeometric term, and thus 
$H(t) = \pFqText{-1/3,1/3}{2}{t}$ 
would be algebraic.
  But it is straightforward to verify that this cannot be the case, 
either by a lookup in Schwarz's classification of algebraic~${}_{2}F_{1}$'s~\cite{Schwarz1873},
or simply by observing that the minimal-order linear differential equation 
$
\left( 9 {t}^{2} - 9t \right) H'' \left( t \right) +
\left( 9t - 18 \right)  H' \left( t \right) - \, H \left( t \right) = 0
$ of $H$ has solutions which cannot be algebraic because one of them has logarithms in its local expansion at~$0$.

  Finally, it follows that $\Theta$ is also transcendental, again by closure properties: if $\Theta$ was algebraic, so would be $\int_{0}^{t} A_{3}(s;x) T(s;x) ds$, and so would be its derivative $A_{3}T$, and so would be~$T$.
\end{proof}

We now give some more explanation on the process we followed to find the closed form proved above.
The first step is to compute a small-order differential operator annihilating~$\Theta$.
It is possible, given the data of the series coefficients, to guess such an operator $L_{g}$ of order $4$ (hence smaller in order than $L_{ct}$), by using for instance the guesser~\cite{kauers09a}:
\begin{verbatim}
(* In Mathematica, continuation of the previous calculations *)
<< "Guess.m"
Theta = Expand[x*Expand[1/x CoefficientList[Series[Theta0,{t,0,70}],t]]
        /. (x^i_ /; i<0) -> 0 /. (y^i_ /; i!=0) -> 0];
Lg = GuessMinDE[Theta,TT[t]]
\end{verbatim}
The output is an operator $L_{g} \in\set Q(x,t)\langle \partial_{t}\rangle$ which is likely to annihilate $\Theta$.
  We can increase our trust in this operator by verifying that $L_{g}$ right-divides $L_{ct}$:
\begin{verbatim}
(* In Mathematica, continuation of the previous calculations *)
OreReduce[Lct,{ToOrePolynomial[Lg,TT[t]]}]
\end{verbatim}
  This line returns the right-remainder of $L_{ct}$ modulo $L_{g}$, which is $0$ as expected.

  At this point, we could also compute the quotient, and, examining its solutions similarly to what was done in the proof of Theorem~\ref{thm:theta-non-algebraic}, prove that $L_{g}$ annihilates~$\Theta$.
  But this is not necessary: the constructed closed form will (by design) be annihilated by $L_{g}$, so the fact that~$L_{g}$ is an annihilator of $\Theta$ is a consequence of the theorem.

  As a next step, we compute a closed form solution $C$ of $L_{g}$.
The starting point is to decompose $L_{g}$ as the least common left multiple (LCLM) of two operators of smaller order~\cite{Hoeij96}:
\begin{verbatim}
# In Maple
L1, L3 := op(DFactorLCLM(Lg, [Dt,t]));
\end{verbatim}
The output is a pair of two operators $L_1$ of order 1, and $L_3$ of order~3,
such that $L_{g} = \text{LCLM}(L_1, L_3)$.
Equivalently, in terms of solution spaces, this means that a basis of solutions of $L_g(y)=0$ is obtained by the union of the bases of $L_1$ and $L_3$, respectively. The operator $L_1$ admits a simple solution; this can be seen using the Maple command
\begin{verbatim}
dsolve(diffop2de(L1, [Dt,t], y(t)), y(t));                       
\end{verbatim}
which outputs 
\[  {\frac {3\,{x}^{3}}{t}}+{\frac {3\,{x}^{4}}{{t}^{2}}}-\frac{2}{t}+{\frac {3\,x}{{
t}^{2}}}-{\frac {{x}^{2}}{{t}^{3}}} . \]
It remains to treat the operator $L_3$. 
The starting point is to decompose it as the product of two operators of smaller order~\cite{Hoeij97}:
\begin{verbatim}
# In Maple
fac := DFactor(L3, [Dt,t]);
\end{verbatim}  
The output is a pair $\texttt{fac} = [L_2, S_1]$ of two operators of order 2, respectively~1,
such that $L_{3} = L_2 \, S_1$.   
Now the differential equation $L_3(z)=0$ is equivalent to $L_2(y)=0$ and $S_1(z) = y$.
Hence, it remains to solve  $L_2(y)=0$.  This can be done by using the algorithm in~\cite{KuHo13} and its Maple implementation provided by the authors\footnote{\url{https://www.math.fsu.edu/~vkunwar/hypergeomdeg3/hypergeomdeg3}}.
Using the command \texttt{hypergeomdeg3}, one gets a solution in terms of hypergeometric $_2F_1$ functions:  
\begin{verbatim}
SOL:=x/t^3/(t*x^3+2*t-x)/(4*t^2*x^3-t^2+2*t*x-x^2)*
        ((x-3*t)*x*hypergeom([-1/3, -2/3],[1],27*t^3)
            -4*t*(2*t*x^3+t-x)*hypergeom([-1/3, 1/3],[2],27*t^3)); 
\end{verbatim}
One can check that this is indeed a solution of $L_2$; indeed, the simplification command 
\begin{verbatim}
simplify(eval(diffop2de(fac[1], [Dt,t], y(t)), y(t) = SOL));
\end{verbatim} 
return~0. 
Moreover, one can show that this solution coincides (locally at $t=0$) with the unique power series solution of~$L_2$. 
Finally, the solution of $L_3(z)=0$ can be found using
\begin{verbatim}
simplify(dsolve( diffop2de(fac[2], [Dt,t], z(t) ) = SOL, z(t)));
\end{verbatim}
which yields             
\[
{\frac {t{x}^{3}+2t-x}{{t}^{3}}\sqrt { \left( 4{x}^{3}-1 \right) {t}^{2}+2tx-{x}^{2}} \left( \int \!{\frac {{\rm
SOL} \cdot {t}^{3}}{t{x}^{3}+2t-x}{\frac {1}{\sqrt { \left( 4{x}^{3}-1 \right) {t}^{2}+2tx-{x}^{2}}}}}
\,dt+c \right) },    
\]                           
where ${\rm SOL}$ is the hypergeometric expression found above and $c=c(x)$
is a constant function in~$t$, that is found by fitting initial terms of the power series expansions.
Putting pieces together yields the expression in the statement of Theorem~\ref{thm:theta-non-algebraic}.
 
Note that the method sketched above is rigorous in the sense that the closed
form solution of $L_{ct}$ found in this way is correct by construction. The
alternative correctness argument in the proof of the theorem is independent of
how the closed form was found.

It is also worth noting that providing a closed form for~$\Theta$ is somewhat more than
just proving its transcendence.  If we were not interested in the hypergeometric
expression, we could prove the transcendence of $\Theta$ directly on the level of
operators. For, the lclm decomposition quoted above translates into a decomposition
$\Theta=\Theta_1+\Theta_3$ where $\Theta_1$ is a solution of $L_1$ and $\Theta_3$ is a
solution of~$L_3$. Since $\Theta_1$ (stated above) is rational, transcendence of $\Theta$
is equivalent to transcendence of~$\Theta_3$. Now assume that $\Theta_3$ is
algebraic. Then the factorization $L_3=L_2S_1$ implies that $y:=S_1(\Theta_3)$ is
algebraic as well, and that it is a solution of~$L_2$. To conclude the argument, it
suffices to observe that $y\neq0$, that $S_1$ is irreducible, and that $S_1$ has a
logarithmic singularity.

\section{Transcendence of $Q$}
\label{sec:non-algebraicity-q}

\begin{thm}
  Assume that $a\neq b$ and $c \neq 0$.
  In particular, this is the case if $a,b,c$ are
variables in the polynomial ring $\QQ[a,b,c]$.
  Then the power series $Q(x,y)$, $Q(x,0)$ and $Q(0,y)$ are transcendental over $\QQ(a,b,c,x,y,t)$.
\end{thm}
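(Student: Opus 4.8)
The plan is to derive the transcendence of $Q(x,y)$, $Q(x,0)$ and $Q(0,y)$ from the transcendence of $\Theta$ (Theorem~\ref{thm:theta-non-algebraic}) via the identity~\eqref{eq:4}, using only standard closure properties: algebraicity of a power series is preserved under sums, products, $\QQ(x,t)$-linear combinations, differentiation in~$x$, and specialization of a variable at~$0$. I also use that a power series not involving a given indeterminate, if algebraic over the full field $\QQ(a,b,c,x,y,t)$, is already algebraic over the field with that indeterminate removed (apply the field automorphisms that fix the series and shift the indeterminate). Thus transcendence of $Q(x,0)$, $Q(0,y)$ and $\Theta$ over $\QQ(a,b,c,x,y,t)$ is equivalent to transcendence over $\QQ(a,b,c,x,t)$, $\QQ(a,b,c,y,t)$ and $\QQ(x,t)$ respectively, and I work with those smaller fields.

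I would first prove that $Q(x,0)$ is transcendental. Suppose it is algebraic over $\QQ(a,b,c,x,t)$. The Kreweras step set $\mathcal S=\{(1,1),(-1,0),(0,-1)\}$ is invariant under swapping the two coordinates, so reflecting walks across the diagonal yields $Q(a,b,c;x,y;t)=Q(b,a,c;y,x;t)$; hence $Q(0,x)=Q(a,b,c;0,x;t)=Q(b,a,c;x,0;t)$ is algebraic over $\QQ(a,b,c,x,t)$ as well. Moreover $Q(0,0)=Q(x,0)|_{x=0}$ and, for $i\in\{1,2,3\}$, $Q_{i,0}=[x^{i}]Q(x,0)=\frac1{i!}\,\partial_x^{i}Q(x,0)|_{x=0}$ are algebraic, by differentiation and specialization. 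Since the coefficients $\beta,\beta_{x,0},\dots$ in~\eqref{eq:4} lie in $\QQ[t,1/t,x,1/x]\subset\QQ(a,b,c,x,t)$, the left-hand side of~\eqref{eq:4} is algebraic over $\QQ(a,b,c,x,t)$, and therefore so is the right-hand side $t^{3}\,\frac{a-b}{c}\,D\,\Theta$, where $D:=ab-(ab-ac-bc+abc)\,Q(0,0)$.

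It then remains to divide out the prefactor $t^{3}\frac{a-b}{c}D$, which is algebraic and, I claim, nonzero. The factor $t^{3}\frac{a-b}{c}$ is a nonzero rational function because $a\neq b$ and $c\neq0$. For $D$, its $t^{0}$-coefficient is $ab-(ab-ac-bc+abc)=c(a+b-ab)$, since $Q(0,0)=1+O(t^{3})$; if $c(a+b-ab)\neq0$, then $D\neq0$. Otherwise $a+b=ab$ (as $c\neq0$), so $ab-ac-bc+abc=ab$, and using $Q(0,0)=1+(ac+bc)t^{3}+O(t^{4})$ one gets $D=ab\,(1-Q(0,0))=-a^{2}b^{2}c\,t^{3}+O(t^{4})$, with $a^{2}b^{2}c\neq0$ because $a\neq b$ forces $ab\neq0$ (if $ab=0$ then $a+b=0$, hence $a=b=0$). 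So $D$ is a nonzero algebraic power series, and dividing~\eqref{eq:4} by $t^{3}\frac{a-b}{c}D$ exhibits $\Theta$ as algebraic over $\QQ(a,b,c,x,t)$, hence over $\QQ(x,t)$, contradicting Theorem~\ref{thm:theta-non-algebraic}. Therefore $Q(x,0)$ is transcendental; $Q(0,y)$ is transcendental by the coordinate symmetry; and $Q(x,y)$ is transcendental, since $Q(x,0)=Q(x,y)|_{y=0}$ would otherwise be algebraic.

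The one step that is more than bookkeeping is the non-vanishing of $D=ab-(ab-ac-bc+abc)Q(0,0)$: this is precisely where the hypotheses $a\neq b$ and $c\neq0$ enter (beyond making $\frac{a-b}{c}$ a nonzero scalar), and the degenerate case $a+b=ab$ must be disposed of separately with the help of the known low-order expansion of $Q(0,0)$. I would also stress attacking $Q(x,0)$ first: it is the logically strongest of the three statements, and — unlike $Q(x,y)$, from which the others follow by setting $y=0$ — it already yields, through the coordinate symmetry together with differentiation and specialization at $0$, all six series $Q(x,0),Q(0,x),Q(0,0),Q_{1,0},Q_{2,0},Q_{3,0}$ occurring in~\eqref{eq:4}.
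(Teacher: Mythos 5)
Your proof is correct and follows essentially the same route as the paper: both reduce, via the identity \eqref{eq:4} and closure properties of algebraic series, to the transcendence of $\Theta$ from Theorem~\ref{thm:theta-non-algebraic}, and both rule out the degenerate vanishing of the prefactor $ab-(ab-ac-bc+abc)Q(0,0)$ by inspecting the coefficients of $t^0$ and $t^3$ in $Q(0,0)$. The only (cosmetic) differences are that you run the contradiction from $Q(x,0)$ rather than $Q(x,y)$ --- which lets you get the algebraicity of $Q(0,0)$ by specialization instead of invoking \cite[Corollary~3]{Beaton_2020} --- and that you prove the non-vanishing of the prefactor directly rather than deriving a contradiction from its vanishing.
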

\begin{proof}
  First note that the algebraicity of the three series is equivalent: if $Q(x,y)$ is algebraic, then so are its specializations $Q(0,y)$ and $Q(x,0)$; and conversely, if, say, $Q(0,y)$ is algebraic, then by symmetry of the step set so is $Q(x,0)$, and by the kernel equation, so is $Q(x,y)$.

  To reach a contradiction, assume that $Q(x,y)$ is algebraic.
  Then, by taking the derivative along $x$ and taking the value at $x=y=0$, the power series $Q_{1,0}$ is also algebraic.
  Repeating the same process, $Q_{2,0}$ and $Q_{3,0}$ are algebraic.
  Recall that $Q(0,0)$ is algebraic~\cite[Corollary~3]{Beaton_2020}.  
 So all in all, the left-hand side~$L$ of Equation~\eqref{eq:4} is algebraic.

  If $(a-b)\Big(ab - (ab - ac - bc + abc)Q(0,0)\Big) \neq 0$, this would imply that
  \begin{equation*}
    \label{eq:5}
    \Theta = \frac{c\,L}{(a-b)
      \Big(ab - (ab - ac - bc + abc)Q(0,0)\Big)
      t^{3}}
  \end{equation*}
  is also algebraic, which is a contradiction with Theorem~\ref{thm:theta-non-algebraic}.

  Thus, $(a-b)\big(ab - (ab - ac - bc + abc)Q(0,0)\big) = 0$.
  By assumption, $a \neq b$, so the second factor has to be zero.
Since $Q(0,0) = 1 + (a+b)c t^3 + \cdots$,
extracting coefficients of $t^0$ and $t^3$ in this second factor yields 
$abc=ac+bc$ and $0 = ab(a+b)c$.
Since $c\neq 0$, these relations imply $ab=a+b$ and $0=ab(a+b)$, thus
$ab=a+b=0$, and finally $a=b=0$, which contradicts the assumption $a\neq b$.
\end{proof}

\section{Particular cases and additional remarks}
\label{sec:part-cases-addit}

If $a=b$, as observed in~\cite[Section~5.5]{Beaton_2020}, the right-hand side of Equation~\eqref{eq:4} vanishes, and then the series $Q(x,y)$ is algebraic.
If $c=0$, then in particular $Q(0,0)=1$, and both sides of Equations~\eqref{eq:4} and~\eqref{eq:kernel} (after clearing out the denominator $c$) vanish.
We do not know if the power series $Q(x,y)$ is algebraic or even D-finite in that case.
With sample values of $a,b,x,y$ and $c=0$, we were not able to guess any algebraic, differential or recurrence relation with the first \num{10000} coefficients in $t$ of the series $Q(a,b,c;x,y;t)$.

The generating function $Q(1,1)$, which counts interacting walks regardless of their ending point, is also of interest, besides $Q(0,0)$ and $Q(x,y)$.
Experimentally, this generating function appears to be algebraic: we could guess a polynomial%
\footnote{Available online: \url{http://www.algebra.uni-linz.ac.at/research/kreweras-interacting/equations_Q11}}
$P(a,b,c;t,u) \in \mathbf{F}_{45007}[a,b,c,t,u]$, with $\mathbb{F}_{45007}$ the finite field with $45007$ elements, such that, for a large number of values of $(a,b,c) \in \mathbb{F}_{45007}^{3}$, one has $P(a,b,c;t,Q(a,b,c;1,1;t)) = 0 \bmod t^{2350}$. The polynomial $P$ has degree $92$ in $t$, degree $24$ in $u$, degree $60$ in $a$ and in $b$, and degree $24$ in $c$. 
In dense monomial form it has a size of more than 1GB.
The next step would be to \emph{lift} the result to obtain a polynomial $P \in \QQ[a,b,c,t,u]$, and to \emph{prove} that $P(a,b,c;t,Q(a,b,c;1,1;t)) = 0$.    
In principle, this is doable using (a variant of) the approach in~\cite{BoKa10}.

\acknowledgements{
We warmly thank Mark van Hoeij for his advice, and for his precious help with the package \href{{https://www.math.fsu.edu/~vkunwar/hypergeomdeg3/hypergeomdeg3}}{hypergeomdeg3}, especially in the parametric case.
}

\end{document}